\theoremstyle{plain}
\newtheorem{thm}{Theorem}[section]
\newtheorem*{thm*}{Theorem A}
\newtheorem*{thm**}{Theorem B}
\newtheorem{cor}[thm]{Corollary}
\newtheorem{lem}[thm]{Lemma}
\theoremstyle{definition}
\newtheorem{df}[thm]{Definition}
\newcommand{\R}{\mathbb{R}}
\newcommand{\Z}{\mathbb{Z}}
\newcommand{\GL}{\mathop{\mathrm{GL}}\nolimits}
\newcommand{\ad}{\mathop{\mathrm{ad}}\nolimits}
\newcommand{\Der}{\mathop{\mathrm{Der}}\nolimits}
\newcommand{\dsum}{\displaystyle\sum}
\newcommand{\ddelta}{\mathop{\scriptstyle{\Delta}}\nolimits}
\newcommand{\relmiddle}[1]{\mathrel{}\middle#1\mathrel{}}
\begin{document}
\title{Infinitely many left-symmetric structures on nilpotent Lie algebras}
\author{Naoki Kato}
\date{}
\maketitle

\begin{abstract}
Dekimpe and Ongenae constructed infinitely many pairwise non-isomorphic complete left-symmetric structures on ${\R}^n$ for $n\geq 6$.
In this paper, we construct a family of complete left-symmetric structures on the cotangent Lie algebra $T^*\mathfrak{g}$ of a certain $n$-dimensional almost abelian nilpotent Lie algebra $\mathfrak{g}$ and give a condition under which two left-symmetric structures in this family are isomorphic.
As a consequence of this result, we obtain infinitely many pairwise non-isomorphic left-symmetric structures on $T^{*}\mathfrak{g}$.
As an application of this construction, we also obtain infinitely many symplectic structures on $T^{*}\mathfrak{g}$ which are pairwise non-symplectomorphic up to homothety.
\end{abstract}

\section{Introduction}
Let $G$ be a connected Lie group.
A left-invariant flat and torsion-free affine connection on $G$  is called a left-invariant affine structure on $G$.
In the case where $G$ is simply connected, it is known that the Lie group $G$ has a left-invariant affine structure if and only if its Lie algebra $\mathfrak{g}$ has a left-symmetric structure.

In \cite{Mil77}, Milnor conjectured that any solvable Lie group admits a complete left-invariant affine structure.
Benoist \cite{Ben95} and Burde and Grunewald \cite{BurGru95} proved that some filiform Lie algebras have no left-symmetric structures.
Such Lie algebras provide counterexamples to Milnor's conjecture.
It is therefore an important problem to determine which Lie algebras admit a left-symmetric structure.
It is known that  $2$ and $3$-step nilpotent Lie algebras, as well as nilpotent Lie algebras of dimension $n\leq 6$, admit left-symmetric structures (\cite{Sch74}, \cite{Bur06}).

On the other hand, for a given Lie algebra $\mathfrak{g}$, it is an important problem to classify left-symmetric structures on $\mathfrak{g}$ or determine how many left-symmetric structures $\mathfrak{g}$ admits.
Left-symmetric structures on ${\R}$, ${\R}^{2}$, and ${\R}^{3}$ have been classified (see \cite{GozRem03}, \cite{Kui53}, and \cite{NagYag74}).
Nilpotent left-symmetric structures on $3$ and $4$-dimensional Lie algebras were classified in \cite{Kim86}.
The classification of simple left-symmetric structures on $3$-dimensional solvable Lie algebras and of simple complete left-symmetric structures on $4$-dimensional solvable Lie algebras was given in \cite{Bur98}.

According to classification results, the number of isomorphism classes of left-symmetric structures are finite for certain Lie algebras.
On the other hand, it is known that some Lie algebras admit infinitely many isomorphism classes of left-symmetric structures.
Dekimpe and Ongenae \cite{DekOng00} constructed infinitely many complete left-symmetric structures on ${\R}^n$ for $n\geq 6$.
The author \cite{Kat24} constructed infinitely many incomplete left-symmetric structures on ${\R}^n$ for $n\geq 8$ by using the left-symmetric structures constructed by Dekimpe and Ongenae.

A closed non-degenerate skew-symmetric bilinear form on $\mathfrak{g}$ is called a symplectic structure on $\mathfrak{g}$.
It is known that a symplectic structure induces a left-symmetric structure (see Definition \ref{df4-1} or \cite{Chu74}).
It is also known that a cosymplectic structure induces a left-symmetric structure \cite{BouMan24}.
Symplectic structures on $4$-dimensional solvable Lie algebras were classified in \cite{Ova06} and those on $6$-dimensional nilpotent Lie algebras were classified in \cite{KhaGozMed04}.
Cosymplectic structures on $3$ and $5$-dimensional solvable Lie algebras were classified in \cite{BouMan24}.
Although symplectic structures themselves have been widely studied, the connection between symplectic structures and algebraic or geometric properties of induced left-symmetric structures has not been studied extensively.

Equivalence classes of symplectic structures up to symplectomorphism and homothety  have been studied in \cite{Cas22} and \cite{CasTam23}.
Castellanos Moscoso \cite{Cas22} proved that, for certain almost abelian Lie algebras, the set of equivalence classes of symplectic structures is a single point. 
Castellanos Moscoso and Tamaru \cite{CasTam23} proved that the set of equivalence classes of symplectic structures on $\mathfrak{h}_3\oplus {\R}^{n-3}$ is a single point, where $\mathfrak{h}_3$ is the $3$-dimensional Heisenberg Lie algebra.

For a given symplectic structure, the induced left-symmetric structure is uniquely determined by the equivalence class of the symplectic structure.
Hence, one can prove that two symplectic structures are not symplectomorphic up to homothety by showing that the corresponding left-symmetric structures are not isomorphic.

In this paper, for a certain almost abelian nilpotent Lie algebra $\mathfrak{g}$, we construct infinitely many complete left-symmetric structures on the cotangent Lie algebra $T^{*}\mathfrak{g}$  of $\mathfrak{g}$ and give a condition under which these left-symmetric structures are isomorphic (Theorem \ref{thm3-1}).
As a consequence, for any $n\geq 2$, we obtain infinitely many pairwise non-isomorphic complete left-symmetric structures on the $n$-step nilpotent Lie algebra $T^{*}\mathfrak{g}$ (Corollary \ref{cor3-1}).

As an application of Theorem \ref{thm3-1}, we obtain infinitely many symplectic structures on $T^{*}\mathfrak{g}$ which are pairwise non-symplectomorphic up to homothety (Corollary \ref{cor4-1}).
Consequently, the set of equivalence classes of symplectic structures on $T^{*}\mathfrak{g}$ contains a continuous family.

 \section{Preliminary}
\subsection{Left-symmetric structures}
 Let $\mathfrak{g}$ be a finite-dimensional  Lie algebra over ${\R}$.
\begin{df}
A bilinear product $\ddelta \colon  \mathfrak{g}\times \mathfrak{g}\to\mathfrak{g}$ is said to be a left-symmetric structure on $\mathfrak{g}$ if $\ddelta$ satisfies 
\begin{align}
x\ddelta(y\ddelta z)-(x \ddelta y)\ddelta z&= y\ddelta (x\ddelta z)-(y\ddelta x)\ddelta z \quad \text{and} \label{df1-1}\\
[x,y]&=x\ddelta y-y\ddelta x\label{df1-2}
\end{align}
for any $x,y,z\in \mathfrak{g}$.

Two left-symmetric structures $\ddelta$ on $\mathfrak{g}$ and $\ddelta'$ on  $\mathfrak{g}'$ are said to be isomorphic if there exists an isomorphism of Lie algebras $\phi\colon\mathfrak{g}\to\mathfrak{g}'$ such that $\phi(x\ddelta y)=\phi(x)\ddelta' \phi(y)$ for any $x,y\in\mathfrak{g}$.
 
\end{df}
Let $(x,y,z)=x\ddelta(y\ddelta z)-(x\ddelta y)\ddelta z$ be the associator of $\ddelta$.
Then the equation \eqref{df1-1} is equivalent to the equation $(x,y,z)=(y,x,z)$.

Let $\ell\colon\mathfrak{g}\to\mathfrak{gl(g)}$ and $r\colon \mathfrak{g}\to\mathfrak{gl(g)}$ be left and right translation maps, which are defined by 
\begin{align*}
\ell(x)&=\ell_x\colon y\mapsto x\ddelta y \quad \text{ and}\\
r(x)&=r_x\colon y\mapsto y\ddelta x,
\end{align*}
respectively.
Then the equation \eqref{df1-2} is written as $\ad_x=\ell_x-r_x$, where $\ad\colon\mathfrak{g}\to\mathfrak{gl(g)}$ is the adjoint representation of $\mathfrak{g}$.
Under \eqref{df1-2}, the equation \eqref{df1-1} holds if and only if the left translation map $\ell$ is a homomorphism of Lie algebras.
 
\begin{df}
A left-symmetric structure $\ddelta $ on $\mathfrak{g}$ is said to be complete if $r_x\in\mathfrak{gl(g)}$ is nilpotent for any $x\in\mathfrak{g}$.
\end{df}
See \cite{Kim86} for other equivalent definitions of completeness.
We remark that a left-symmetric structure is complete if and only if the corresponding left-invariant affine structure on the simply connected Lie group is complete (see \cite{Hel79} and \cite{Kim86}).

\subsection{Almost abelian Lie algebras}

\begin{df}
A Lie algebra $\mathfrak{g}$ is said to be almost abelian if $\mathfrak{g}$ has a codimension one abelian ideal.
\end{df}

Let $D$ be a derivation of a  Lie algebra ${\mathfrak{h}}$.
Define a Lie bracket on ${\R}\oplus\mathfrak{h} $ by
$$
[a+x,b+y]=aD(y)-bD(x)+ [x,y]_{\mathfrak{h}}\quad \text{ for } a,b\in{\R} \text{ and } x,y\in\mathfrak{h}, 
$$
where $[\, , \,]_{\mathfrak{h}}$ is the Lie bracket of $\mathfrak{h}$.
We denote the Lie algebra ${\R}\oplus\mathfrak{h} $ with the Lie bracket defined above by ${\R}\ltimes_D\mathfrak{h}$ and call it the semi-direct sum of ${\R}$ and $\mathfrak{h}$ by $D$.

Let $\mathfrak{g}$ be an $(n+1)$-dimensional almost abelian Lie algebra.
Then $\mathfrak{g}$ is isomorphic to ${\R}\ltimes_D{\R}^n$ for some $D\in\Der({\R}^{n})=\mathfrak{gl}({\R}^{n})$.
If $D$ is nilpotent, then ${\R}\ltimes_D{\R}^n$ is a nilpotent Lie algebra.
Moreover, if the nilpotency index of $D$ is $k$, then ${\R}\ltimes_D{\R}^n$ is a $k$-step nilpotent Lie algebra.

Two semi-direct sums ${\R}\ltimes_{D_1}{\R}^{n}$ and ${\R}\ltimes_{D_2}{\R}^{n}$ are isomorphic if and only if there exist $P\in\GL({\R}^{n})$ and $\lambda\in{\R}\setminus \{0\}$ such that $D_1=\lambda P^{-1}D_2P$ (see \cite{Ave22}).

\subsection{Cotangent Lie algebras}
Let $\mathfrak{g}$ be an $n$-dimensional Lie algebra.
Define a Lie bracket on $\mathfrak{g}\oplus\mathfrak{g}^*$  by
$$
[x+\phi,y+\psi] = [x,y]_{\mathfrak{g}}+\ad^*_x\psi-\ad^*_y \phi \quad \text{ for  }x, y\in\mathfrak{g} \text{ and }\phi,\psi \in\mathfrak{g}^*,
$$
where $\ad^*\colon\mathfrak{g}\to\mathfrak{gl(g^*)}$ is the coadjoint representation of $\mathfrak{g}$ defined by $\ad^*_x\phi=-\phi\circ\ad_x$.
We denote the $2n$-dimensional Lie algebra $(\mathfrak{g}\oplus\mathfrak{g^*}, [\, , \, ])$ by $T^*\mathfrak{g}$ and call it the cotangent Lie algebra of $\mathfrak{g}$.
For properties of cotangent Lie algebras, see \cite{Ova16}.

\section{Main result}
For a Lie algebra $\mathfrak{g}$, we define the lower central series of $\mathfrak{g}$ recursively by 
$$
\mathfrak{g}^{(0)}=\mathfrak{g} \quad \text{and } \mathfrak{g}^{(k)}=[\mathfrak{g}^{(k-1)}, \mathfrak{g}].
$$

For an integer $n\geq 2$, let $J_n(0)\in\mathfrak{gl}({\R}^{n})$ be the Jordan block of size $n$ with eigenvalue $0$ and $\mathfrak{g}={\R}\ltimes_{J_n(0)}{\R}^{n}$ the semi-direct sum of ${\R}$ and ${\R}^{n}$ by $J_n(0)$, which is almost abelian and $n$-step nilpotent.

Let $\{t\}$ and $\{e_1,\ldots ,e_n\}$ be the standard basis of ${\R}$ and ${\R}^{n}$, respectively.
Then $\{t,e_1,\ldots ,e_n\}$ is a basis of $\mathfrak{g}$ and the Lie bracket of $\mathfrak{g}$ is given by
$$
[t,e_{i+1}]=e_{i} \quad \text{for }i=1,\ldots ,n-1.
$$

We consider the cotangent Lie algebra $T^*\mathfrak{g}$.
Let $\{z\}$ and $\{e^{1},\ldots ,e^{n}\}$ be the dual basis of $\{t\}$ and $\{e_1,\ldots ,e_n\}$, respectively.
For $i=1,\ldots, n$, we put $f_i=e^{{n-i+1}}$.
Then $\mathcal{E}=\{z,e_1, f_1, \ldots ,e_n, f_n, t\}$ is a basis of $T^{*}\mathfrak{g}$.
 The Lie bracket of $T^{*}\mathfrak{g}$ is given by 
 \begin{align*}
 [t,e_{i+1}]&=e_i,\\
 [t,f_{i+1}]&=-f_i, \text{ and}\\
 [e_{i+1}, f_{n-i+1}]&=z
 \end{align*}
for $i=1,\ldots ,n-1$.
Then we have
\begin{align}
(T^{*}\mathfrak{g})^{{(k)}}&=\langle z, e_1,\ldots ,e_{n-k}, f_1, \ldots , f_{n-k}\rangle \quad  \label{eq3-1}\text{ and } \\
c(T^{*}\mathfrak{g})&=(T^{*}\mathfrak{g})^{(n-1)}=\langle z, e_1, f_1\rangle \label{eq3-2}
\end{align}
for $ k=1,\ldots ,n-1$.
If $n\geq 3$, then we have
$$
[(T^{*}\mathfrak{g})^{(1)}, (T^{*}\mathfrak{g})^{(1)}]=\langle z\rangle.
$$

Let $\alpha$ and  $\beta$ be real numbers which satisfy
\begin{align}
k(\alpha-1)+1 &\not= 0 \quad \text{ for any }k\in{\Z}_{\geqq 1},\label{cond3-1}\\
k(\beta+1)-1 &\not= 0\quad \text{ for any }k\in{\Z}_{\geqq 1}, \text{ and}\label{cond3-2}\\
n(\alpha-1)(\beta+1)-\alpha+\beta+2 &\not= 0.\label{cond3-3}
\end{align}
For $i=1,\ldots, n$, we put
\begin{align}
\alpha_i&=\dfrac{i(\alpha-1)+1}{(\alpha-1)(i-1)+1},  \label{eq3-3}\\
\beta_i&=\dfrac{i(\beta+1)-1 }{-(\beta +1)(i-1)+1}, \quad \text{and}  \label{eq3-4}\\
\gamma_i&=\dfrac{\alpha_i\beta_{n-i}-\beta_{n-i}}{2\alpha_i\beta_{n-i}+\alpha_i-\beta_{n-i}}=\dfrac{(\alpha-1)\{(n-i)(\beta+1)-1\}}{n(\alpha-1)(\beta+1)-(\alpha-1)+\beta+1}. \label{eq3-5}
\end{align}
We remark that the sequences $\{\alpha_i\}$ and $\{\beta_i\}$ are the solutions to the recurrence relations of 
\begin{align}
\alpha_{i+1}&=2-\dfrac{1}{\alpha_i}, \alpha_1=\alpha \quad \text{and}  \label{eq3-6}\\
\beta_{i+1}&=-2-\dfrac{1}{\beta_i}, \beta_1=\beta, \label{eq3-7}
\end{align} 
respectively.
The sequence $\{\gamma_i\}$ satisfies the equations
\begin{align}
\gamma_i+\beta_{n-i}\gamma_{i+1}=0 \quad \text{and} \label{eq3-8}\\
(\alpha_{i+1}-1)(\gamma_i-1)-(\beta_{n-i}+1)\gamma_{i+1}=0. \label{eq3-9}
\end{align}

Define a bilinear product $\ddelta_{(\alpha, \beta)}\colon T^{*}\mathfrak{g}\times T^{*}\mathfrak{g}\to T^{*}\mathfrak{g}$ by
\begin{align}
\begin{aligned}
t\ddelta_{(\alpha, \beta)} e_{i+1}&=\alpha_i e_i, \\
e_{i+1}\ddelta_{(\alpha, \beta)} t&=(-1+\alpha_i)e_i,\\ 
t\ddelta_{(\alpha, \beta)} f_{i+1}&=\beta_i f_i,\\
f_{i+1}\ddelta_{(\alpha, \beta)}t&=(1+\beta_i)f_i, \\
e_{i+1}\ddelta_{(\alpha, \beta)} f_{n-i+1}&=\gamma_i z,\\
f_{n-i+1}\ddelta_{(\alpha, \beta)} e_{i+1}&=(-1+\gamma_i)z,
\end{aligned}\label{df}
\end{align}
and the other products are zero, where $i=1,\ldots ,n-1$.

\begin{lem}
The bilinear product $\ddelta_{(\alpha, \beta)}$ is a complete left-symmetric structure on $T^*\mathfrak{g}$.
\end{lem}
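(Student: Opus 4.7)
The plan is to check the three defining properties directly on the basis $\mathcal{E}$. First, for the bracket identity \eqref{df1-2}, substituting the definitions gives $t\ddelta_{(\alpha,\beta)}e_{i+1} - e_{i+1}\ddelta_{(\alpha,\beta)}t = \alpha_i e_i - (\alpha_i - 1)e_i = e_i = [t,e_{i+1}]$, and one-line analogues reproduce $-f_i$ and $z$ for the other two nontrivial brackets; all remaining brackets and product differences are zero by inspection.

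The bulk of the work is verifying the associator identity $(x,y,w) = (y,x,w)$ on basis triples. The vectors $z, e_1, f_1$ appear only as outputs of the product, so they annihilate $T^{*}\mathfrak{g}$ on both sides, and any associator whose first or second slot lies in $\langle z, e_1, f_1\rangle$ vanishes automatically. This reduces the check to $x,y \in \{t, e_i, f_i : i \geq 2\}$. When $\{x,y\}$ is a pair of $e$'s or a pair of $f$'s, both associators vanish, because all products $e\ddelta e$ and $f\ddelta f$ are zero while each intermediate term lands in the kernel of the outer left-multiplication. The genuinely nontrivial cases (compared against their transposes) are $(t, e_i, t)$, $(t, f_i, t)$, $(t, e_i, f_j)$, $(t, f_i, e_j)$ and $(e_i, f_j, t)$, the last three with $i + j = n+3$. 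A direct calculation reduces these five cases respectively to $\alpha_{i-2}(2 - \alpha_{i-1}) = 1$ (the recurrence \eqref{eq3-6}), its analogue $\beta_{i-2}(-2 - \beta_{i-1}) = 1$ coming from \eqref{eq3-7}, the identity \eqref{eq3-8}, a consequence of combining \eqref{eq3-8} and \eqref{eq3-9}, and the identity \eqref{eq3-9}. The closed-form expressions \eqref{eq3-3}--\eqref{eq3-5} are built precisely so that these relations hold, which one verifies by substitution.

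For completeness, the plan is to introduce the decreasing filtration $V_1 = \langle z\rangle$, $V_k = \langle z, e_1,\ldots,e_{k-1}, f_1,\ldots,f_{k-1}\rangle$ for $2 \leq k \leq n+1$, and $V_{n+2} = T^{*}\mathfrak{g}$, and read off from the product table that $r_x(V_k) \subset V_{k-1}$ for each basis vector $x$: $r_t$ shifts indices of $e, f$ down by one and kills $z, e_1, f_1, t$; each $r_{e_j}$ is nonzero only on $t$ (sending it into $V_j$) and on $f_{n-j+2}$ (sending it to $z$); and similarly for $r_{f_j}$. By linearity $r_x$ is filtration-decreasing for every $x \in T^{*}\mathfrak{g}$, whence $r_x^{n+2} = 0$.

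The main obstacle is the case $(t, f_i, e_j) = (f_i, t, e_j)$, whose resulting identity $\gamma_{j-1} - 1 = \alpha_{j-1}(\gamma_{j-2} - 1)$ is not one of the paper's listed relations and must be derived by eliminating $\gamma_{j-2}$ between \eqref{eq3-8} and \eqref{eq3-9}; the other nontrivial cases each match a single displayed recurrence or identity, but this one genuinely requires both at once.
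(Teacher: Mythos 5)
Your proof is correct and takes essentially the same route as the paper: the paper verifies that $\ell$ is a Lie algebra homomorphism by computing representation matrices, which is the identical computation to your associator check since $(x,y,w)-(y,x,w)=\left([\ell_x,\ell_y]-\ell_{[x,y]}\right)w$, and it reduces to the same handful of nontrivial identities --- including the one requiring both \eqref{eq3-8} and \eqref{eq3-9}, which the paper likewise combines in its evaluation of $[\ell_t,\ell_{f_{i+1}}]$. Your filtration argument for completeness is merely a more explicit version of the paper's one-line observation that each $r_x$ is nilpotent by inspection of the product table.
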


\begin{proof}
By the definition of $\ddelta_{(\alpha,\beta)}$, equation \eqref{df1-2} holds.
To verify \eqref{df1-1}, it suffices to prove that
\begin{align}
[\ell_t, \ell_{e_{i+1}}]&=\ell_{e_i}  &&(i=1,\ldots ,n-1), \label{lem31-1}\\
[\ell_t,\ell_{f_{i+1}}]&=-\ell_{f_i}   &&(i=1,\ldots, n-1),\label{lem31-2}\\
[\ell_{e_{i+1}},\ell_{f_{n-i+1}}] &=\ell_z   &&(i=1,\ldots, n-1), \text{ and} \label{lem31-3}\\
[\ell_{e_i},\ell_{f_j}]&=0  &&(i+j\not= n+2) \label{lem31-4}.
\end{align}

Put $e'_0=t, e'_i=e_i, e'_{n+i}=f_i,$ and $e'_{2n+1}=z$ for $i=1,\ldots, n$.
For $i,j=0,\ldots, 2n+1$, let $E_{ij}$ denote the matrix unit.
Then, by the definition of $\ddelta_{(\alpha,\beta)}$, the representation matrices of $\ell_t, \ell_{e_{i+1}}, \ell_{f_{i+1}},$ and $ \ell_{f_{n-i+1}}$ with respect to the basis $\{e'_0,\ldots, e'_{2n+1}\}$ are given by
\begin{align*}
\ell_t&=\dsum_{k=1}^{n-1}(\alpha_k E_{k, k+1}+\beta_k E_{n+k, n+k+1}) ,\\
\ell_{e_{i+1}}&=(-1+\alpha_i)E_{i, 0}+\gamma_i E_{2n+1, 2n-i+1}, \\
\ell_{f_{i+1}}&=(1+\beta_i)E_{n+i, 0}+(-1+\gamma_{n-i})E_{2n+1, n-i+1}, \quad \text{ and}\\
\ell_{f_{n-i+1}}&=(1+\beta_{n-i})E_{2n-i, 0}+(-1+\gamma_i)E_{2n+1, i+1}
\end{align*}
for $i=1,\ldots ,n-1.$
Then, for $i=2,\ldots , n-1$, we have the following:
\begin{align*}
\ell_t\ell_{e_{i+1}}&=\alpha_{i-1}(-1+\alpha_i)E_{i-1, 0}.\\
\ell_{e_{i+1}}\ell_t& =\gamma_{i}\beta_{n-i+1}E_{2n+1, 2n-i+2}.\\
\ell_{t}\ell_{e_2}&=\ell_{e_2}\ell_t=0.\\
\ell_t\ell_{f_{i+1}}&=\beta_{i-1}(1+\beta_i)E_{n+i-1,0}. \\
\ell_{f_{i+1}}\ell_t&=(-1+\gamma_{n-i})\alpha_{n-i+1}E_{2n+1, n-i+2}.\\
\ell_t\ell_{{f_2}}&=\ell_{f_2}\ell_t=0.\\
\ell_{e_{i+1}}\ell_{f_{n-i+2}} &=\gamma_{i}(1+\beta_{n-i+1})E_{2n+1, 0}.\\
\ell_{f_{n-i+2}}\ell_{e_{i+1}}&=(-1+\gamma_{i-1})(-1+\alpha_{i})E_{2n+1, 0}.\\
\ell_{e_i}\ell_{f_j}&=\ell_{f_j}\ell_{e_i}=0 \quad (i+j\not= n+3).
\end{align*}
Hence, by \eqref{eq3-6}, \eqref{eq3-7}, \eqref{eq3-8}, and \eqref{eq3-9}, we obtain the following:
\begin{align*}
[\ell_t, \ell_{e_{i+1}}]
&=\alpha_{i-1}(-1+\alpha_i)E_{i-1, 0}-\gamma_{i}\beta_{n-i+1}E_{2n+1, 2n-i+2}\\
&=(\alpha_{i-1}-1)E_{i-1, 0}+\gamma_{i-1}E_{2n+1, 2n-i+2}\\
&=\ell_{e_i} \quad (i=2,\ldots ,n-1).\\
[\ell_t,\ell_2]
&=0=\ell_{e_1}.\\
[\ell_t,\ell_{f_{i+1}}]
&=\beta_{i-1}(1+\beta_i)E_{n+i-1, 0}-(-1+\gamma_{n-i})\alpha_{n-i+1}E_{2n+1, n-i+2} \\
&=(-1-\beta_{i-1})E_{n+i-1, 0}-\{\gamma_{n-i}-1+(\beta_i+1)\gamma_{n-i+1}\}E_{2n+1, n-i+2}\\
&=-(1+\beta_{i-1})E_{n+i-1,0}-(-1+\gamma_{n-i+1})E_{2n+1, n-i+2}\\
&=-\ell_{f_i} \quad (i=2,\ldots, n-1)\\
[\ell_t,\ell_{f_2}]
&=0=-\ell_{f_1}.\\
[\ell_{e_{i+1}}, \ell_{f_{n-i+1}}]&=0=\ell_z.\\
[\ell_{e_{i+1}},\ell_{f_{n-i+2}}]
&=\gamma_{i}(1+\beta_{n-i+1})E_{2n+1, 0}-(-1+\gamma_{i-1})(-1+\alpha_{i})E_{2n+1, 0}=0.\\
[\ell_{e_i}, \ell_{f_j}]
&=0 \quad (i+j\not= n+2, n+3).
\end{align*}
Therefore the equations \eqref{lem31-1}, \eqref{lem31-2}, \eqref{lem31-3}, and \eqref{lem31-4} hold. 

Moreover, by the definition of $\ddelta_{(\alpha,\beta)}$, for any $x\in T^{*}\mathfrak{g}$, $r_x$ is nilpotent.
Hence the left-symmetric structure $\ddelta_{(\alpha,\beta)}$ is complete.
\end{proof}

\begin{thm}\label{thm3-1}
Let $\{\alpha_i\}, \{\beta_i\}$ and $\{\alpha'_i\}, \{\beta_i'\}$ be the sequences defined by \eqref{eq3-3} and $\eqref{eq3-4}$ with initial values $\alpha_1=\alpha, \beta_1=\beta$ and $\alpha_1'=\alpha',  \beta_1'=\beta'$, respectively.
Suppose that $\alpha_i, \alpha_i'\not= \dfrac{1}{2}$ and $\beta_i,\beta_i'\not=-\dfrac{1}{2}$ for any $i=1,\ldots n$, and that  $\alpha_{n-1}, \alpha'_{n-1}\not= 0, 1$ and  $\beta_{n-1}, \beta'_{n-1}\not= 0, -1$.

Then two left-symmetric structures $\ddelta=\ddelta_{(\alpha,\beta)}$ and $\ddelta'=\ddelta_{(\alpha', \beta')}$ on $T^{*}\mathfrak{g}$ are isomorphic if and only if either of the following conditions holds:
\begin{itemize}
\item[{\rm(i)}] $\alpha=\alpha'$ and $\beta=\beta'$.
\item[{\rm (ii)}] $\alpha_i=-\beta_i'$ and $\beta_i=-\alpha_i'$ for each $i=1,\ldots, n-1$.
\end{itemize}
\end{thm}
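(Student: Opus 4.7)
\emph{If direction.} In case (i) the identity is an LSS isomorphism. For case (ii), I define $\sigma\colon T^{*}\mathfrak{g}\to T^{*}\mathfrak{g}$ on the basis $\mathcal{E}$ by $\sigma(t)=-t$, $\sigma(e_i)=f_i$, $\sigma(f_i)=e_i$, $\sigma(z)=-z$; a direct inspection of the three families of defining brackets shows $\sigma$ is a Lie algebra automorphism. The LSS compatibility $\sigma(x\ddelta y)=\sigma(x)\ddelta'\sigma(y)$ reduces, product by product in \eqref{df}, to the single algebraic identity $\gamma_i+\gamma'_{n-i}=1$, which follows by direct substitution from the hypothesis $\alpha_i=-\beta'_i,\beta_i=-\alpha'_i$ into the closed form \eqref{eq3-5}.

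\emph{Only-if direction, structural setup.} Let $\phi\colon(T^{*}\mathfrak{g},\ddelta)\to(T^{*}\mathfrak{g},\ddelta')$ be an LSS isomorphism. Being a Lie algebra automorphism, $\phi$ preserves the lower central series, the center, and (for $n\geq 4$) the line $\langle z\rangle=[(T^{*}\mathfrak{g})^{(1)},(T^{*}\mathfrak{g})^{(1)}]$, yielding $\phi(z)=\mu z$ for some $\mu\neq 0$. A Jordan-structure argument, comparing the nilpotency index $n$ of $\ad_t$ with the nilpotency indices $\leq 2$ of $\ad_{e_n}$ and $\ad_{f_n}$, forces, modulo $(T^{*}\mathfrak{g})^{(1)}$,
\begin{equation*}
\phi(t)\equiv\tau t+ae_n+bf_n,\qquad \phi(e_n)\equiv pe_n+qf_n,\qquad \phi(f_n)\equiv re_n+sf_n,
\end{equation*}
with $\tau\neq 0$ and $M=\begin{pmatrix}p&r\\q&s\end{pmatrix}$ invertible. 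Propagating via $e_i=\ad_t^{n-i}(e_n)$ and $f_i=(-1)^{n-i}\ad_t^{n-i}(f_n)$ gives the leading-order forms $\phi(e_i)\equiv\tau^{n-i}pe_i+(-\tau)^{n-i}qf_i$ and $\phi(f_i)\equiv(-\tau)^{n-i}re_i+\tau^{n-i}sf_i$, modulo deeper filtration.

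\emph{LSS compatibility and case split.} The key identity is $\phi\circ\ell_t=\ell'_{\phi(t)}\circ\phi$. Applying it to $e_{i+1}$ and to $f_{i+1}$ for each $i\in\{1,\dots,n-1\}$ and comparing the leading coefficients of $e_i,f_i$---where the contributions of $\ell'_{e_n}$ and $\ell'_{f_n}$ from the $ae_n+bf_n$ part of $\phi(t)$ land only in the $z$-direction---yields the four alternatives
\begin{align*}
p\neq 0 &\Rightarrow \alpha_i=\alpha'_i,& q\neq 0 &\Rightarrow \beta'_i=-\alpha_i,\\
r\neq 0 &\Rightarrow \alpha'_i=-\beta_i,& s\neq 0 &\Rightarrow \beta_i=\beta'_i.
\end{align*}
Since $M$ is invertible, a case analysis on the zero/nonzero patterns of $(p,q,r,s)$ shows that every admissible pattern forces condition (i) (when $p,s\neq 0$) or condition (ii) (when $q,r\neq 0$); patterns with exactly one of $p,s$ zero lie in the intersection and force both conditions, which is consistent with the statement. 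The recurrences \eqref{eq3-6}--\eqref{eq3-7} then extend the equalities to all indices.

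\emph{Main obstacle.} The principal technical difficulty lies in the deeper-filtration corrections to $\phi(e_i),\phi(f_i)$. These enter the $z$-components of the LSS compatibility coming from $e_{i+1}\ddelta f_{n-i+1}=\gamma_i z$, producing a secondary linear system in $\mu, a, b$, and the correction coefficients. The non-degeneracy hypotheses $\alpha_i,\alpha'_i\neq 1/2$, $\beta_i,\beta'_i\neq -1/2$, $\alpha_{n-1},\alpha'_{n-1}\neq 0,1$, $\beta_{n-1},\beta'_{n-1}\neq 0,-1$ ensure that this secondary system is solvable without introducing constraints on $(\alpha,\beta),(\alpha',\beta')$ beyond (i) or (ii). The small cases $n=2,3$, where $\langle z\rangle$ is not isolated by the iterated commutator of $(T^{*}\mathfrak{g})^{(1)}$, require a direct identification of $z$ within the center using the LSS structure itself.
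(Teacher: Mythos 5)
Your overall architecture coincides with the paper's. The ``if'' direction is correct: your $\sigma$ is a sign-variant of the paper's swap map ($\phi(t)=t$, $\phi(e_j)=(-1)^{n-j+1}f_j$, $\phi(f_j)=(-1)^{n-j}e_j$, $\phi(z)=(-1)^nz$), and both reduce the $z$-component check to the identity $\gamma_i+\gamma'_{n-i}=1$, which is exactly the paper's Lemma \ref{lem3-5}. For the ``only if'' direction your steps --- pin down $\phi$ modulo the lower central series, propagate with $\ad_t$, compare leading coefficients in $\phi\circ\ell_t=\ell'_{\phi(t)}\circ\phi$, and case-split on the invertible $2\times2$ block --- reproduce the paper's Lemmas \ref{lem3-3} and \ref{lem3-4}, and your four implications and the case analysis on $(p,q,r,s)$ are correct.

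The one genuinely different ingredient is also where the gap sits: your ``Jordan-structure argument'' asserting $\tau\neq0$ and that $\phi(e_n),\phi(f_n)$ have no $t$-component modulo $(T^{*}\mathfrak{g})^{(1)}$. The paper does \emph{not} obtain this from the Lie structure; it is the content of Lemma \ref{lem3-2}, proved from the left-symmetric relations $t\ddelta t=e_n\ddelta e_n=f_n\ddelta f_n=e_n\ddelta f_n=0$, and this is precisely where \emph{all} of the hypotheses $\alpha_i,\alpha'_i\neq\tfrac12$, $\beta_i,\beta'_i\neq-\tfrac12$, $\alpha_{n-1},\alpha'_{n-1}\neq0,1$, $\beta_{n-1},\beta'_{n-1}\neq0,-1$ are consumed. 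Your purely Lie-theoretic substitute can be made rigorous for $n\geq3$ (the plane $\langle e_n,f_n\rangle$ is characterized in the degree-one graded piece as the locus where $\ad_x$ induces a map of rank $\leq1$ into the next piece), but you only assert it, and it genuinely fails for $n=2$: there $[e_2,f_2]=z$ together with $[t,e_2]=e_1$, $[t,f_2]=-f_1$ makes the induced pairing on $\langle t,e_2,f_2\rangle$ nondegenerate in all directions, so Lie automorphisms can mix $t$ with $e_2,f_2$ and no distinguished plane exists without invoking $\ddelta$. Since the theorem is claimed for all $n\geq2$, deferring ``$n=2,3$'' is a real hole, and it must be filled the way the paper does, via the LSS products. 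Relatedly, your ``Main obstacle'' paragraph misplaces the difficulty: once (i) or (ii) is extracted from the leading coefficients, the deeper-filtration $z$-component equations impose no further necessary conditions (sufficiency is handled by the explicit map $\sigma$), so there is no ``secondary system'' to solve; the nondegeneracy hypotheses are needed earlier, in the step you replaced.
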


We remark that the sequences $\{\alpha_i\}$ and $\{\beta_i\}$ satisfy
\begin{align*}
|\alpha_{i+1}|
&=\left| 2-\dfrac{1}{\alpha_i}\right|
\geq 2-\dfrac{1}{|\alpha_{i}|}
\quad \text{and}\\
|\beta_{i+1}|
&=\left| 2+\dfrac{1}{\beta_i}\right|
\geq 2-\dfrac{1}{|\beta_i|},
\end{align*}
respectively.
By these inequalities,  $|\alpha|>1$ and $|\beta|>1$ imply $|\alpha_i|>1$ and $|\beta_i|>1$ for any $i$, respectively.
Hence, for example, the sequences $\{\alpha_i\}$ and $\{\beta_i\}$ with 
$$
(\alpha_1, \beta_1)\in A=\{(\alpha,\beta)\mid \alpha>1, \beta>1, n(\alpha-1)(\beta-1)-\alpha+\beta+2\not= 0\}
$$
satisfy the conditions \eqref{cond3-1}, \eqref{cond3-2}, and \eqref{cond3-3} and the assumption of Theorem \ref{thm3-1}.
Moreover, any two distinct elements of $A$ do not satisfy the condition (ii). 
Therefore, any two distinct left-symmetric structures in $\{\ddelta_{(\alpha,\beta)}\mid (\alpha,\beta)\in A\}$ are non-isomorphic.

\begin{cor}\label{cor3-1}
For any integer $n\geq 2$, the $n$-step nilpotent Lie algebra $T^{*}\mathfrak{g}$ admits uncountably many pairwise non-isomorphic complete left-symmetric structures.
\end{cor}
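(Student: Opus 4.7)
The plan is to apply Theorem \ref{thm3-1} to the uncountable family $A$ of parameter pairs already exhibited in the remark immediately preceding the corollary. Since $A$ is obtained from the open quadrant $\{\alpha>1,\ \beta>1\}\subset\R^2$ by deleting a single real-analytic curve, it has the cardinality of the continuum, so it is enough to verify that every pair in $A$ satisfies the hypotheses of Theorem \ref{thm3-1} and that distinct pairs in $A$ yield non-isomorphic structures.

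First I would check the hypotheses of Theorem \ref{thm3-1} on $A$. Conditions \eqref{cond3-1} and \eqref{cond3-2} are automatic when $\alpha>1$ and $\beta>1$, since then $k(\alpha-1)+1>1$ and $k(\beta+1)-1\geq 2k-1\geq 1$ for every $k\geq 1$; condition \eqref{cond3-3} is exactly the inequality carved out in the definition of $A$. The remaining non-vanishing conditions $\alpha_i\neq \tfrac12$, $\beta_i\neq -\tfrac12$, and $\alpha_{n-1}\neq 0,1$, $\beta_{n-1}\neq 0,-1$ would be deduced from the inductive estimate in the remark, namely that $|\alpha|>1$ propagates to $|\alpha_i|>1$ for all $i$ (and symmetrically for $\beta$), which in particular excludes the values $0$, $\pm\tfrac12$, and $\pm 1$.

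Second I would rule out isomorphism between distinct pairs in $A$. Condition (i) of Theorem \ref{thm3-1} trivially forces $(\alpha,\beta)=(\alpha',\beta')$. Condition (ii) at $i=1$ reads $\alpha=\alpha_1=-\beta_1'=-\beta'$, but $\alpha>1$ and $\beta'>1$ inside $A$ give $-\beta'<-1<\alpha$, a contradiction. Hence no two distinct elements of $A$ determine isomorphic left-symmetric structures. Completeness of each $\ddelta_{(\alpha,\beta)}$ is given by the lemma preceding Theorem \ref{thm3-1}, and the $n$-step nilpotency of $T^*\mathfrak{g}$ is read off from \eqref{eq3-1}--\eqref{eq3-2}, which yield $(T^*\mathfrak{g})^{(n-1)}=\langle z,e_1,f_1\rangle\neq 0$ while $(T^*\mathfrak{g})^{(n)}=0$.

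There is no genuine obstacle: Theorem \ref{thm3-1} packages all the difficult computation, and the corollary reduces to exhibiting one sufficiently large parameter region in which the symmetric alternative (ii) is ruled out by a straightforward sign argument. The only mildly nontrivial verification is the propagation $|\alpha|>1\Rightarrow |\alpha_i|>1$, which is already recorded in the remark as a direct consequence of the recursions \eqref{eq3-6} and \eqref{eq3-7}.
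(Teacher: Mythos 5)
Your proposal is correct and follows essentially the same route as the paper, which deduces the corollary from Theorem \ref{thm3-1} via the remark preceding it: one takes the same parameter set $A$, notes that $\alpha>1$, $\beta>1$ propagate to $|\alpha_i|>1$, $|\beta_i|>1$ so all hypotheses of the theorem hold, and rules out alternative (ii) by the sign of $\alpha_1=-\beta_1'$. The only additions you make are the (correct) explicit cardinality remark and the explicit check of $n$-step nilpotency, both of which the paper treats as already established.
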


\begin{proof}[{\bf Proof of Theorem \ref{thm3-1}}]

Suppose that there exists an isomorphism $\phi\colon (T^{*}\mathfrak{g},\ddelta_{(\alpha,\beta)})\to (T^{*}\mathfrak{g},\ddelta_{(\alpha',\beta')}).$
Since $\phi$ is an automorphism of the Lie algebra $T^{*}\mathfrak{g}$, $\phi$ preserves $(T^{*}\mathfrak{g})^{(k)}$ for $k=1, \ldots, n-1$.
The matrix of $\phi$ with respect to the basis $\mathcal{E}=\{z,e_1, f_1, \ldots ,e_n, f_n, t\}$ is as follows:
\begin{align*}
\phi(z)&=z_0' z+c_1e_1+d_1f_1.\\
\phi(e_j)&=z_j z+\dsum_{i=1}^{j}a_{ij}e_i+\dsum_{i=1}^{j}b_{ij}f_i  \quad (j=1,\ldots, n-1).\\
\phi(f_j)&=z_j' z+\dsum_{i=1}^{j}c_{ij}e_i +\dsum_{i=1}^{j}d_{ij} f_i \quad (j=1,\ldots, n-1).\\
\phi(e_n)&=z_nz+\dsum_{i=1}^{n}a_{in}e_i+\dsum_{i=1}^{n}b_{in}f_i +t_n t.\\ 
\phi(f_n)&=z_n'z+\dsum_{i=1}^{n}c_{in}e_i +\dsum_{i=1}^{n}d_{in} f_i+t_n' t.\\
\phi(t)&=z_0 z+\dsum_{i=1}^{n}a_ie_i+\dsum_{i=1}^{n} b_i f_i +t_0 t.
\end{align*}
Since the matrix of $\phi$ is regular, the following holds:
\begin{align}
&\begin{pmatrix} z_0'&z_1&z_1'\\c_{1}& a_{11}&c_{11}\\d_1&b_{11}&d_{11}\end{pmatrix} \text{ is regular.} \label{reg3-1} \\
&\begin{pmatrix} a_{kk}&c_{kk}\\ b_{kk}&d_{kk}\end{pmatrix} \text{ is regular for }  k=2,\ldots, n-1.\label{reg3-2} \\
&\begin{pmatrix} a_{nn}&c_{nn}&a_n\\b_{nn}&d_{nn}&b_n\\ t_n&t_n'&t_0\end{pmatrix} \text{ is regular}.\label{reg3-3}
\end{align}

\begin{lem}\label{lem3-2}
For $i=2,\ldots, n$, $a_i=0, b_i=0, t_n=0, t'_n=0$, and $t_0\not= 0$.
\end{lem}

\begin{proof}
Since $t\ddelta t=0$, we have
\begin{align*}
0
&=\phi(t)\ddelta'\phi(t)\\
&=(z_0 z+\dsum_{i=1}^{n}a_ie_i+\dsum_{i=1}^{n} b_i f_i +t_0 t)\ddelta'(z_0 z+\dsum_{j=1}^{n}a_je_j+\dsum_{j=1}^{n} b_j f_j +t_0 t)\\
&=Kz+t_0\dsum_{k=1}^{n} (a_k t\ddelta' e_k+b_kt\ddelta f_k+a_ke_k\ddelta t+b_k f_k\ddelta t)\\
&=Kz+t_0\dsum_{k=2}^{n}(a_k \alpha'_{k-1}e_{k-1}+b_k \beta'_{k-1}f_{k-1}+a_k(-1+\alpha'_{k-1})e_{k-1}+b_k(1+\beta'_{k-1})f_{k-1})\\
&=Kz+t_0\dsum_{k=2}^{n}(a_k(2\alpha'_{k-1}-1)e_{k-1}+b_k(2\beta'_{k-1}+1)f_{k-1}),
\end{align*}
where $K\in{\R}$ is defined by
$$
\dsum_{i,j=1}^{n}(a_ia_j e_i\ddelta' e_j+a_ib_je_i\ddelta' f_j+ b_ia_j f_i\ddelta' e_j+b_ib_jf_i\ddelta f_j)=Kz.
$$
Hence, by the assumption of Theorem \ref{thm3-1}, we have
\begin{equation}
\begin{split}
&\text{either }t_0=0 \\
&\text{or } a_k=0 \text{ and }b_k=0  \quad (k=2,\ldots, n).
\end{split}\label{lem34-1}
\end{equation}

By similar calculations for $e_n\ddelta e_n=0, f_n\ddelta f_n=0$, and $e_n\ddelta f_n=0$, we obtain the following three equations:
\begin{align*}
0
&=\phi(e_n)\ddelta' \phi(e_n)\\
&=Lz+t_n\dsum_{k=2}^{n}( a_{kn}(2\alpha'_{k-1}-1)e_{k-1}+b_{kn}(2\beta'_{k-1}+1)f_{k-1}).\\
0
&=\phi(f_n)\ddelta'\phi(f_n)\\
&=Mz+t'_n\dsum_{k-2}^{n}(c_{kn}(2\alpha'_{k-1}-1)e_{k-1}+d_{kn}(2\beta'_{k-1}+1)f_{k-1}).\\
0
&=\phi(e_n)\ddelta'\phi(f_n)\\
&=Nz+\dsum_{k=2}^{n}\left\{(t_nc_{kn}\alpha'_{k-1}+t_n'a_{kn}(\alpha'_{k-1}-1))e_{k-1}+(t_nd_{kn}\beta'_{k-1}+t_n'b_{kn}(\beta'_{k-1}+1)f_{k-1})\right\},
\end{align*}
where $L, M, N\in{\R}$.
Therefore, by the assumption of Theorem \ref{thm3-1},  the following  holds: 
\begin{align}
&\begin{aligned}
&\text{either } t_n=0\\
&\text{or } a_{kn}=0 \text{ and } b_{kn}=0  \quad (k=2,\ldots, n).
\end{aligned}\label{lem34-2}
\\
&\begin{aligned}
&\text{either }t_n'=0\\
&\text{or } c_{kn}=0 \text{ and } d_{kn}=0  \quad  (k=2,\ldots, n).
\end{aligned}\label{lem34-3}
\\
&\begin{aligned}
&\text{both }t_nc_{kn}\alpha_{k-1}'+t_n'a_{kn}(\alpha_{k-1}'-1)=0  \quad (k=2,\ldots, n)\\
&\text{and }t_n d_{kn}\beta'_{k-1}+t_n'b_{kn}(\beta_{k-1}'+1)=0  \quad (k=2,\ldots, n).
\end{aligned}\label{lem34-4}
\end{align}

By the condition \eqref{lem34-1}, \eqref{lem34-2}, and \eqref{lem34-3}, the regular matrix $C=\begin{pmatrix} a_{nn}&c_{nn}&a_n\\b_{nn}&d_{nn}&b_n\\ t_n&t_n'&t_0\end{pmatrix}$ must be one of the following forms:
\begin{itemize}
\item[(A)] $\begin{pmatrix} 0&c_{nn}&a_n\\0&d_{nn}&b_n\\ t_n&0&0\end{pmatrix}$  with  $t_n\not= 0$.
\item[(B)] $\begin{pmatrix} a_{nn}&0&a_n\\b_{nn}&0&b_n\\ 0&t_n'&0\end{pmatrix}$ with $t_n'\not= 0$.
\item[(C)] $\begin{pmatrix} a_{nn}&c_{nn}&0\\b_{nn}&d_{nn}&0\\ 0&0&t_0\end{pmatrix}$  with $t_0\not= 0$. 
\end{itemize}
If $C$ is of the form (A), then we have $c_{nn}=0$ and $d_{nn}=0$ by \eqref{lem34-4} and the assumption of Theorem \ref{thm3-1}.
This contradicts the regularity of  $C$.
Similarly, if $C$ is of the form (B), we obtain a contradiction with the regularity of $C$.

Thus we conclude that $t_n=0, t_n'=0, t_0\not= 0, a_k=0$, and $b_k=0$ for $k=2,\ldots, n$ by \eqref{lem34-1}.
\end{proof}

By Lemma \ref{lem3-2}, $\phi(t)=z_0 z+a_1 e_1+b_1 f_1+t_0 t$.
\begin{lem}\label{lem3-3}
\mbox{}
\begin{itemize}
\item[(i)] For $i=1,\ldots, n-1, z_i=z_i'=0$.
\item[(ii)] For $l=1,\ldots ,n-1$ and $k=1,\ldots ,l$, 
$$
\begin{pmatrix} a_{kl}&c_{kl}\\b_{kl} &d_{kl}\end{pmatrix}=t_0^{{n-l}}\begin{pmatrix}a_{k+n-l \, n} & (-1)^{{n-l}}c_{k+n-l \, n}\\ (-1)^{n-l}b_{k+n-l \, n} & d_{k+n-l \, n}\end{pmatrix}.
$$
\end{itemize}
\end{lem}

\begin{proof}
For $l=1,\ldots ,n-1$, since $[t,e_{l+1}]=e_{l}$, we have
\begin{align*}
\phi(e_{l})
&=[\phi(t),\phi(e_{l+1})]\\
&=\left[z_0z+a_1e_1+b_1f_1+t_0 t, z_{l+1}z+\dsum_{k=1}^{l+1} a_{k\, l+1} e_k+\dsum_{k=1}^{l+1}b_{k\, l+1}f_k\right]\\
&=t_0\dsum_{k=2}^{l+1}a_{k\, l+1}e_{k-1}-t_0\dsum_{k=2}^{l+1}b_{k\, l+1} f_{k-1}\\
&=t_0\dsum_{k=1}^{l}a_{k+1\, l+1}e_{k}-t_0\dsum_{k=1}^{l}b_{k+1\, l+1} f_{k}.
\end{align*}
Hence  we obtain 
$$
z_{l}=0, \,  t_0a_{k+1\, l+1}= a_{kl}, \text{ and } -t_0b_{k+1\, l+1}=b_{k\, l}
$$
for $k=1,\ldots, l$.
By an inductive argument, it follows that 
$$
a_{kl}=t_0^{n-l}a_{k+n-l\, n}  \text{ and }  b_{kl}=(-t_0)^{{n-l}}b_{k+n-l\, n}.
$$

By an analogous calculation for $[t,f_{l+1}]=-f_{l}$, we obtain
$$
z_{l}'=0,\, c_{kl}=(-t_0)^{n-l}c_{k+n-l\, n}, \text{ and }  d_{k l}=t_0^{{n-l}}d_{k+n-l\, n}
$$
for $k=1,\ldots, l$.

\end{proof}

\begin{lem}\label{lem3-4}
Either of the following holds:
\begin{itemize}
\item[(i)] For each $l=1,\ldots, n-1$, $\alpha_l=\alpha_l'$ and $\beta_l=\beta_l'$.
\item[(ii)] For each $l=1,\ldots, n-1$, $\alpha_l=-\beta_l'$ and $\beta_l=-\alpha_l'$.
\end{itemize}
\end{lem}

\begin{proof}
For $l=1,\ldots, n-1$, by the equations $t\ddelta e_{l+1}=\alpha_l e_l$ and $t\ddelta f_{l+1}=\beta_l f_l$, we have
\begin{align*}
\alpha_l\phi(e_l)
&=(z_0z+a_1e_1+b_1f_1+t_0t)\ddelta' \left(z_{l+1}z+\dsum_{k=1}^{l+1} a_{k\, l+1}e_k+\dsum_{k=1}^{l+1} b_{k\, l+1}f_k\right)\\
&=t_0\dsum_{k=2}^{l+1}a_{k\, l+1}\alpha_{k-1}' e_{k-1} +t_0\dsum_{k=2}^{l+1} b_{k\, l+1} \beta_{k-1}'f_{k-1}\\
&=t_0\dsum_{k=1}^{l}a_{k+1\, l+1}\alpha_{k}' e_{k} +t_0\dsum_{k=1}^{l} b_{k+1\, l+1} \beta_{k}'f_{k} \quad \text{ and}\\
\beta_l\phi(f_l)
&=(z_0z+a_1e_1+b_1f_1+t_0t)\ddelta' \left(z_{l+1}'z+\dsum_{k=1}^{l+1} c_{k\, l+1}e_k+\dsum_{k=1}^{l+1} d_{k\, l+1}f_k\right)\\
&=t_0\dsum_{k=2}^{l+1}c_{k\, l+1}\alpha_{k-1}' e_{k-1} +t_0\dsum_{k=2}^{l+1} d_{k\, l+1} \beta_{k-1}'f_{k-1}\\
&=t_0\dsum_{k=1}^{l}c_{k+1\, l+1}\alpha_{k}' e_{k} +t_0\dsum_{k=1}^{l} d_{k+1\, l+1} \beta_{k}'f_{k} . 
\end{align*}

Hence we obtain
$$
\begin{pmatrix}\alpha_l a_{ll}&\beta_l c_{ll}\\\alpha_l b_{ll}& \beta_l d_{ll}\end{pmatrix}=t_0\begin{pmatrix}\alpha_l'a_{l+1\, l+1}& \alpha_l' c_{l+1\, l+1}\\ \beta_l'b_{l+1\, l+1}&\beta_l' d_{l+1\, l+1}\end{pmatrix}.
$$
By applying Lemma \ref{lem3-3}, we have
$$
t_0^{n-l}\begin{pmatrix} \alpha_l a_{nn}& (-1)^{n-l} \beta_l c_{nn} \\(-1)^{n-l}\alpha_l b_{nn} & \beta_l d_{nn}\end{pmatrix}
=t_0^{n-l}\begin{pmatrix} \alpha_l' a_{nn}&(-1)^{n-l-1} \alpha_l' c_{nn}\\ (-1)^{n-l-1}\beta_l'b_{nn}   & \beta_l' d_{nn}\end{pmatrix}.
$$
Since $t_0\not= 0$ by Lemma \ref{lem3-2},  $\alpha_l, \alpha_l', \beta_l$, and $\beta_l'$ satisfy 
$$
\begin{pmatrix}(\alpha_l-\alpha_l')a_{nn} & (\beta_l+\alpha_l')c_{nn} \\ (\alpha_l+\beta_l') b_{nn} & (\beta_l-\beta_l')d_{nn}\end{pmatrix}=O.
$$

Since $C=\begin{pmatrix} a_{nn}&c_{nn}&0\\b_{nn}&d_{nn}&0\\ 0&0&t_0\end{pmatrix}$ is regular, the matrix $\begin{pmatrix} a_{nn}&c_{nn}\\b_{nn}&d_{nn}\end{pmatrix}$ is regular.
Hence either one of the following holds:
\begin{itemize}
\item $a_{nn}\not= 0$ and $c_{nn}\not= 0$.
\item $a_{nn}\not=0$ and $c_{nn}= 0$.
\item $a_{nn}=0$ and $c_{nn}\not= 0$.
\end{itemize}
In the case where $a_{nn}\not=0$ and $c_{nn}\not=0$, we obtain $\alpha_l=\alpha_l'$ and $\beta_l=-\alpha_l'$.
If $b_{nn}\not=0$, then $\alpha_l=-\beta_l'$ and we obtain $\beta_l=\beta_l'$.
If $b_{nn}=0$, then $d_{nn}\not=0$ and we obtain $\beta_l=\beta_l'$.
In the case where $a_{nn}\not=0$ and $c_{nn}=0$, $d_{nn}\not= 0$ and we obtain $\alpha_l=\alpha_l'$ and $\beta_l=\beta_l'$.
In the case where $a_{nn}=0$ and $c_{nn}\not= 0$,  $b_{nn}\not=0$ and we obtain $\alpha_l=-\beta_l'$ and $\beta_l=-\alpha_l'$.

\end{proof}

Therefore either condition (i) or (ii) of Theorem \ref{thm3-1} holds.

Conversely, suppose that either condition (i) or condition (ii) of Theorem \ref{thm3-1} holds.
If condition (i) holds, then the identity map is an isomorphism from $(T^{*}\mathfrak{g}, \ddelta)$ to $(T^{*}\mathfrak{g}, \ddelta')$.

If condition (ii) holds, we define a linear isomorphism $\phi\colon T^{*}\mathfrak{g}\to T^{*}\mathfrak{g}$ by 
\begin{align*}
\phi(z)&=(-1)^{n} z,\\
\phi(e_j)&=(-1)^{n-j+1}f_j,\\
\phi(f_j)&=(-1)^{n-j}e_j, \quad \text{ and}\\
\phi(t)&=t
\end{align*}
for $j=1,\ldots, n$.
We show that $\phi$ is an isomorphism from $(T^{*}\mathfrak{g},\ddelta)$ to $(T^{*}\mathfrak{g},\ddelta')$.

\begin{lem}\label{lem3-5} 
$\gamma_i$ and $\gamma_i'$ satisfy 
$$
\gamma_{n-i}'+\gamma_{i}=1 \quad \text{for } i=1,\ldots, n-1
 $$
 \end{lem}

\begin{proof}
Since $\alpha_i=-\beta_i'$ and $\beta_i=-\alpha_i'$, we have
$$
\gamma_i
=\dfrac{(\alpha_i-1)\beta_{n-i}}{2\alpha_i\beta_{n-i}+\alpha_i-\beta_{n-i}}
=\dfrac{(\beta_i'+1)\alpha'_{n-i}}{2\beta_i'\alpha'_{n-i}-\beta'_i+\alpha'_{n-i}}.
$$

Therefore,
\begin{align*}
\gamma_{n-i}'+\gamma_i
&=\dfrac{(\alpha'_{n-i}-1)\beta'_i}{2\alpha'_{n-i}\beta_i'+\alpha_{n-i}'-\beta_i'}+\dfrac{(\beta_i'+1)\alpha'_{n-i}}{2\beta_i'\alpha'_{n-i}-\beta'_i+\alpha'_{n-i}}\\
&=\dfrac{2\alpha_{n-i}'\beta_i'-\beta_i'+\alpha_{n-i}'}{2\beta_i'\alpha'_{n-i}-\beta'_i+\alpha'_{n-i}}\\
&=1.
\end{align*}

\end{proof}

For $j=1,\ldots, n-1$, we have
\begin{align*}
\phi(t)\ddelta' \phi(e_{j+1})
&=(-1)^{{n-j}}t\ddelta' f_{j+1}
=(-1)^{{n-j}}\beta_j' f_{j}
=(-1)^{n-j+1}\alpha_j f_{j}\\
&=\phi(\alpha_j e_j)
=\phi(t\ddelta e_{j+1})\\
\phi(t)\ddelta' \phi(f_{j+1})
&=(-1)^{{n-j-1}}t\ddelta' e_{j+1}
=(-1)^{{n-j-1}}\alpha_j' e_{j}
=(-1)^{n-j}\beta_j e_{j}\\
&=\phi(\beta_j f_j)
=\phi(t\ddelta f_{j+1})\\
\phi(e_{j+1})\ddelta'\phi(t)
&=(-1)^{{n-j}}f_{j+1}\ddelta' t
=(-1)^{{n-j}}(1+\beta_j') f_{j}\\
&=(-1)^{n-j+1}(-1+\alpha_j) f_{j}
=\phi((-1+\alpha_j)e_j)
=\phi( e_{j+1}\ddelta t)\\
\phi(f_{j+1})\ddelta'\phi(t)
&=(-1)^{{n-j-1}} e_{j+1}\ddelta' t
=(-1)^{{n-j-1}}(-1+\alpha_j') e_{j}\\
&=(-1)^{n-j}(1+\beta_j) e_{j}
=\phi((1+\beta_j)f_j)
=\phi(f_{j+1}\ddelta t)\\
\phi(e_{j+1})\ddelta' \phi(f_{n-j+1})
&=(-1)^{{n-j}}f_{j+1}\ddelta' (-1)^{j-1}e_{n-j+1}
=(-1)^{n-1}(-1+\gamma_{n-j}')z\\
&=(-1)^{n}\gamma_i z
=\phi(\gamma_i z)
=\phi(e_{j+1}\ddelta f_{n-j+1})\\
\phi(f_{n-j+1})\ddelta' \phi(e_{j+1})
&=(-1)^{j-1}e_{n-j+1}\ddelta' (-1)^{n-j}f_{j+1}
=(-1)^{n-1}(\gamma_{n-j}')z\\
&=(-1)^{n-1}(1-\gamma_i) z
=\phi((-1+\gamma_i) z)
=\phi(f_{n-j+1}\ddelta e_{j+1}).
\end{align*}
Therefore $\phi$ is an isomorphism from $(T^{*}\mathfrak{g}, \ddelta)$ to $(T^{*}\mathfrak{g}, \ddelta')$.

\end{proof}


\section{An application for symplectic structures}
Let $\mathfrak{g}$ be a $2n$-dimensional Lie algebra.

\begin{df}\label{df4-1}
A non-degenerate skew-symmetric bilinear form $\omega$ on $\mathfrak{g}$ is said to be a symplectic structure on $\mathfrak{g}$ if $\omega$ satisfies $d\omega=0$, that is,  $\omega$ satisfies
$$
\omega(x,[y,z])+\omega(y,[z,x])+\omega(z,[x,y])=0
$$
for any $x,y,z\in\mathfrak{g}$.

Let $\omega$ and $\omega'$ be symplectic structures on $\mathfrak{g}$.
Two symplectic structures $\omega$ and $\omega'$ are said to be symplectomorphic up to homothety (or equivalent up to automorphism and scale) if there exists an automorphism $\phi$ of the Lie algebra $\mathfrak{g}$ and $c\not=0$ such that $\phi^{*}\omega'=c\omega$.
\end{df}
Some geometric properties of the set of equivalence classes of symplectic structures have been studied by Castellanos Moscoso and Tamaru in \cite{CasTam23}.

It is known that a symplectic structure induces a left-symmetric structure.
\begin{df}\label{df4-2}
Let $\omega$ be a symplectic structure on $\mathfrak{g}$.
The left-symmetric structure $\ddelta_\omega$ on $\mathfrak{g}$ defined by
\begin{equation}
\omega(x\ddelta_\omega y,z)=-\omega(y,[x, z])
\end{equation}
is called the left-symmetric structure induced by $\omega$.
\end{df}
For properties of left-symmetric structures induced by symplectic structures, see \cite{Chu74} and  \cite{MedRev91}.

\begin{lem}\label{lem4-1}
If $\omega$ and $\omega'$ are symplectomorphic up to homothety, then the left-symmetric structures $\ddelta_{\omega}$ and $\ddelta_{\omega'}$ are isomorphic.
\end{lem}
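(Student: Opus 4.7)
The plan is to verify directly that the hypothesized automorphism $\phi$ intertwines the two induced products, using the defining equation in Definition \ref{df4-2} together with the two properties of $\phi$: it is a Lie algebra automorphism and it satisfies $\phi^{*}\omega'=c\omega$ for some nonzero $c$.

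First I would unpack the homothety condition as the identity $\omega'(\phi(u),\phi(v))=c\,\omega(u,v)$ for all $u,v\in\mathfrak{g}$. Then, for fixed $x,y\in\mathfrak{g}$, I would test the two potentially different elements $\phi(x\ddelta_\omega y)$ and $\phi(x)\ddelta_{\omega'}\phi(y)$ against an arbitrary vector of the form $\phi(z)$. On the one hand, by the defining relation for $\ddelta_{\omega'}$,
\begin{equation*}
\omega'\bigl(\phi(x)\ddelta_{\omega'}\phi(y),\,\phi(z)\bigr)=-\omega'\bigl(\phi(y),[\phi(x),\phi(z)]\bigr).
\end{equation*}
Using that $\phi$ preserves brackets and that $\phi^{*}\omega'=c\omega$, this equals $-c\,\omega(y,[x,z])$, which by the defining relation for $\ddelta_\omega$ equals $c\,\omega(x\ddelta_\omega y,z)$. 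Applying $\phi^{*}\omega'=c\omega$ once more rewrites this as $\omega'(\phi(x\ddelta_\omega y),\phi(z))$.

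Thus $\omega'(\phi(x)\ddelta_{\omega'}\phi(y)-\phi(x\ddelta_\omega y),\phi(z))=0$ for every $z\in\mathfrak{g}$. Since $\phi$ is surjective and $\omega'$ is non-degenerate, the left slot must vanish, giving $\phi(x\ddelta_\omega y)=\phi(x)\ddelta_{\omega'}\phi(y)$ as required. Combined with the fact that $\phi$ is already a Lie algebra isomorphism, this shows that $\phi$ realizes an isomorphism of left-symmetric structures from $(\mathfrak{g},\ddelta_\omega)$ to $(\mathfrak{g},\ddelta_{\omega'})$.

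This proof is almost entirely formal manipulation; there is no real obstacle. The only point requiring a little care is the role of the scalar $c$: it appears on both sides of the computation and cancels cleanly, which is why homothety (not just strict symplectomorphism) still yields an honest isomorphism of the induced left-symmetric algebras without any rescaling of $\phi$.
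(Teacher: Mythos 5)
Your proof is correct and follows essentially the same route as the paper: both run the chain $\omega'(\phi(x\ddelta_\omega y),\phi(z))=c\,\omega(x\ddelta_\omega y,z)=-c\,\omega(y,[x,z])=-\omega'(\phi(y),[\phi(x),\phi(z)])=\omega'(\phi(x)\ddelta_{\omega'}\phi(y),\phi(z))$ and conclude by non-degeneracy of $\omega'$ together with surjectivity of $\phi$. The paper merely leaves that last non-degeneracy step implicit, which you spell out.
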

\begin{proof}
Let $\phi$ be an automorphism of $\mathfrak{g}$ such that $\phi^{*}\omega'=c \omega$ for some $c\not= 0$.
Then $\phi$ satisfies 
\begin{align*}
\omega'(\phi(x\ddelta_{\omega}y), \phi(z))
&=\phi^{*}\omega'(x\ddelta_{\omega} y, z)
=c\omega(x\ddelta_{\omega} y, z)
=-c\omega(y, [x,z])\\
&=-\phi^{*}\omega'(y,[x,z])
=-\omega'(\phi(y), [\phi(x),\phi(z)])\\
&=\omega'(\phi(x)\ddelta_{\omega'} \phi(y),\phi(z)).
\end{align*}
Hence $\phi$ satisfies $\phi(x\ddelta_{\omega}y)=\phi(x)\ddelta_{\omega'}\phi(y)$.
\end{proof}

For a Lie algebra $\mathfrak{g}$, the cotangent Lie algebra $T^{*}\mathfrak{g}$ admits an $\ad$-invariant non-degenerate symmetric bilinear form (see \cite{Ova16}).
Hence, if $T^{*}\mathfrak{g}$ admits a symplectic structure, then $\mathfrak{g}$ must be nilpotent (see \cite{BajBenMed07}).

Let $\mathfrak{g}={\R}\ltimes_{J_n(0)}{\R}^{n}$ be the semi-direct sum of ${\R}$ and ${\R}^{n}$ by $J_n(0)$ and consider the cotangent Lie algebra $T^{*}\mathfrak{g}$ of $\mathfrak{g}$ with the basis $\mathcal{E}=\{z,e_1, f_1, \ldots ,e_n, f_n, t\}$.
Denote its dual basis by $\{z^{*}, e_{1}^*, f_{1}^*, \ldots ,e_{n}^*, f_{n}^*, t^{*}\}$.

\begin{lem}\label{lem4-2}
For a real number $\lambda\not\in{\Z}$, define a sequence $\{\lambda_i\}$ by $\lambda_i=\lambda -i+1$.
Then 
$$
\omega_{\lambda}=t^{*}\wedge z^{*}+\dsum_{i=1}^{n}\lambda_i e_i^{*}\wedge f_{n-i+1}^{*}
$$
is a symplectic structure on $T^{*}\mathfrak{g}$.
\end{lem}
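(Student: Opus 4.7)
The plan is to verify the two defining conditions of a symplectic structure: non-degeneracy, and the cocycle identity $\omega_\lambda(x,[y,z])+\omega_\lambda(y,[z,x])+\omega_\lambda(z,[x,y])=0$.

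\textbf{Step 1: Non-degeneracy.} Reorder the basis as $(e_1,f_n,e_2,f_{n-1},\ldots,e_n,f_1,t,z)$. Because $\omega_\lambda$ pairs $e_i$ exclusively with $f_{n-i+1}$ with coefficient $\lambda_i$, and $t$ exclusively with $z$ with coefficient $1$, the matrix of $\omega_\lambda$ in this ordered basis is block-diagonal with $n+1$ skew-symmetric $2\times 2$ blocks, whose determinants are $\lambda_1^2,\ldots,\lambda_n^2,1$. The hypothesis $\lambda\notin\Z$ gives $\lambda_i=\lambda-i+1\neq 0$ for each $i=1,\ldots,n$, so the full matrix is invertible.

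\textbf{Step 2: The cocycle identity.} It suffices to check the identity on all unordered triples of basis vectors. The only nontrivial brackets are $[t,e_{i+1}]=e_i$, $[t,f_{i+1}]=-f_i$ and $[e_{i+1},f_{n-i+1}]=z$, and the only nonzero values of $\omega_\lambda$ on pairs of basis elements come from the pairings $(t,z)$ and $(e_i,f_{n-i+1})$. For a triple to possibly contribute, some bracket in the cyclic sum must produce a basis vector that is $\omega_\lambda$-paired with the remaining basis vector. Running through the types of triples:
\begin{itemize}
\item Any triple containing $z$ contributes $0$, since $z$ is central and $\omega_\lambda(z,\cdot)$ vanishes on everything produced by brackets in $T^*\mathfrak{g}$.
\item Triples $\{t,e_j,e_k\}$, $\{t,f_j,f_k\}$, $\{e_i,e_j,f_k\}$, $\{e_i,f_j,f_k\}$ all vanish because every bracket lands in $\langle e_\bullet\rangle$, $\langle f_\bullet\rangle$, or $\langle z\rangle$, and none of these are $\omega_\lambda$-paired with the third element.
\item Triples $\{t,e_j,f_k\}$ with $j+k\neq n+2$ vanish: the bracket $[e_j,f_k]=0$, while $[t,e_j]=e_{j-1}$ and $[t,f_k]=-f_{k-1}$ produce elements whose $\omega_\lambda$-partners ($f_{n-j+2}$ and $e_{n-k+2}$ respectively) do not appear in the triple.
\end{itemize}
The only essential case is therefore $\{t,e_{i+1},f_{n-i+1}\}$ for $i=1,\ldots,n-1$. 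A direct calculation gives
\begin{align*}
&\omega_\lambda(t,[e_{i+1},f_{n-i+1}])+\omega_\lambda(e_{i+1},[f_{n-i+1},t])+\omega_\lambda(f_{n-i+1},[t,e_{i+1}])\\
&\qquad=\omega_\lambda(t,z)+\omega_\lambda(e_{i+1},f_{n-i})+\omega_\lambda(f_{n-i+1},e_i)\\
&\qquad=1+\lambda_{i+1}-\lambda_i,
\end{align*}
which vanishes because $\lambda_{i+1}-\lambda_i=-1$ by the very definition $\lambda_i=\lambda-i+1$. This is the content of the identity; it is essentially forced by the arithmetic-progression choice of the coefficients $\lambda_i$.

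\textbf{Main obstacle.} There is no real obstacle: the argument is a case analysis whose only delicate point is locating which triples of basis vectors can possibly yield a nonzero contribution. Once one observes that $\omega_\lambda$ pairs the basis into the disjoint pairs $\{(t,z)\}\cup\{(e_i,f_{n-i+1})\}_{i=1}^n$, the non-degeneracy and the cocycle computation both reduce to one-line checks, the latter matching the recursion $\lambda_{i+1}=\lambda_i-1$ built into the definition of $\{\lambda_i\}$.
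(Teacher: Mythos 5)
Your proposal is correct and takes essentially the same route as the paper: the paper computes $d\omega_{\lambda}$ via the Chevalley--Eilenberg differential on the dual basis ($de_i^{*}=e_{i+1}^{*}\wedge t^{*}$, $df_i^{*}=-f_{i+1}^{*}\wedge t^{*}$, $dz^{*}=\sum f_{n-k+1}^{*}\wedge e_{k+1}^{*}$) and reduces to $\sum_k(1-\lambda_k+\lambda_{k+1})\,t^{*}\wedge e_{k+1}^{*}\wedge f_{n-k+1}^{*}=0$, which is exactly your identity $1+\lambda_{i+1}-\lambda_i=0$ on the triples $(t,e_{i+1},f_{n-i+1})$ expressed in the dual language. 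Your case analysis on basis triples and the block-diagonal argument for non-degeneracy are both sound.
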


\begin{proof}
By the definition of $\omega_\lambda$, it is non-degenerate.

By the definition of the Lie bracket of $T^{*}\mathfrak{g}$, we have
\begin{align*}
dt^{*}&=0,\\
de_{n}^{*}&=0,\quad de_i^{*}=e_{i+1}^{*}\wedge t^{*} \quad \text{ for } i=1,\ldots, n-1,\\
df_n^{*}&=0, \quad df_i^{*}=-f_{i+1}^{*}\wedge t^{*} \quad \text{ for }i=1,\ldots ,n-1, \text{ and}\\
dz^{*}&=\dsum_{k=1}^{n-1} f_{n-k+1}^{*}\wedge e_{k+1}^{*}.
\end{align*}
Hence $\omega_{\lambda}$ satisfies
\begin{align*}
d\omega_{\lambda}
&=-\dsum_{k=1}^{n-1} t^{*}\wedge f_{n-k+1}^{*}\wedge e_{k+1}^{*}+\dsum_{i=1}^{n-1}\lambda_i e_{i+1}^{*}\wedge t^{*}\wedge f_{n-i+1}^{*} +\dsum_{i=2}^{n}\lambda_i e_i^{*}\wedge f_{n-i+2}^{*}\wedge t^{*}\\
&=\dsum_{k=1}^{n-1}(1-\lambda_k+\lambda_{k+1})t^{*}\wedge e_{k+1}^{*}\wedge f_{n-k+1}^{*}\\
&=0.
\end{align*}

\end{proof}

\begin{lem}\label{lem4-3}
The left-symmetric structure $\ddelta_{\omega_{\lambda}}$ on $T^{*}\mathfrak{g}$ induced by $\omega_{\lambda}$ coincides with the left-symmetric structure $\ddelta_{(\alpha,\beta)}$ defined by \eqref{df}, where $\alpha=\dfrac{\lambda-1}{\lambda}, \beta=-\dfrac{\lambda-n+2}{\lambda-n+1}$.
\end{lem}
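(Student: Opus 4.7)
The plan is a direct verification from the defining formula of Definition \ref{df4-2}. For each pair of basis vectors $(x, y) \in \mathcal{E} \times \mathcal{E}$, I compute the $1$-form $w \mapsto -\omega_\lambda(y, [x, w])$ on $T^{*}\mathfrak{g}$, invert $\omega_\lambda$ to extract $x \ddelta_{\omega_\lambda} y$, and compare with \eqref{df}. The inversion is elementary because $\omega_\lambda$ is completely determined by the nonzero pairings $\omega_\lambda(t, z) = 1$ and $\omega_\lambda(e_i, f_{n-i+1}) = \lambda_i$; for example, a $1$-form of the form $c\, f_{n-i+1}^{*}$ corresponds to the vector $(c/\lambda_i)\, e_i$, and analogously for the other pairings.

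The nonzero brackets in $T^{*}\mathfrak{g}$ are $[t, e_{j+1}] = e_j$, $[t, f_{j+1}] = -f_j$ for $j = 1, \ldots, n-1$, and $[e_a, f_b] = z$ whenever $a + b = n + 2$. Combining this list with the support of $\omega_\lambda(y, -)$ shows that at most six families of products can be nonzero, namely $t \ddelta_{\omega_\lambda} e_{j+1}$, $t \ddelta_{\omega_\lambda} f_{j+1}$, $e_{j+1} \ddelta_{\omega_\lambda} t$, $f_{j+1} \ddelta_{\omega_\lambda} t$, $e_{j+1} \ddelta_{\omega_\lambda} f_{n-j+1}$, and $f_{n-j+1} \ddelta_{\omega_\lambda} e_{j+1}$. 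Performing the inversion in each case produces, respectively, the coefficients $\lambda_{j+1}/\lambda_j$, $-\lambda_{n-j}/\lambda_{n-j+1}$, $-1/\lambda_j$, $-1/\lambda_{n-j+1}$, $\lambda_j$, and $\lambda_{j+1}$ in front of $e_j$, $f_j$, $e_j$, $f_j$, $z$, $z$. All remaining products vanish: either $[x, \cdot] \equiv 0$, or $\omega_\lambda(y, \cdot)$ vanishes on the image of $[x, \cdot]$.

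It remains to match these coefficients with the sequences $\alpha_i, \beta_i, \gamma_i$ of \eqref{df}. Set $\alpha = \lambda_2/\lambda_1 = (\lambda - 1)/\lambda$ and $\beta = -\lambda_{n-1}/\lambda_n = -(\lambda-n+2)/(\lambda-n+1)$. Since $\lambda_{i+1} = \lambda_i - 1$, a one-line verification shows that $i \mapsto \lambda_{i+1}/\lambda_i$ and $i \mapsto -\lambda_{n-i}/\lambda_{n-i+1}$ obey the recurrences \eqref{eq3-6} and \eqref{eq3-7} with those initial values, so they equal $\alpha_i$ and $\beta_i$, respectively; consequently $-1 + \alpha_i = -1/\lambda_i$ and $1 + \beta_i = -1/\lambda_{n-i+1}$ match the middle two families. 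A short algebraic reduction of \eqref{eq3-5} with these $\alpha, \beta$ collapses to $\gamma_i = \lambda_i$, forcing $-1 + \gamma_i = \lambda_{i+1}$ and matching the last two families. The main obstacle is not conceptual but organizational: the intertwined index conventions $e_i \leftrightarrow f_{n-i+1}$ and $[e_a, f_b] \neq 0 \iff a + b = n + 2$, together with the signs introduced by skew-symmetry, make index bookkeeping the most error-prone step, so I would fix a small table of the nontrivial $\omega_\lambda$-pairings before running the case analysis.
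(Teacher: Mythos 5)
Your proposal is correct and follows essentially the same route as the paper: both unwind the defining relation $\omega_\lambda(x\ddelta_{\omega_\lambda}y,\cdot)=-\omega_\lambda(y,[x,\cdot])$ by inverting $\omega_\lambda$ (the paper packages this as $\Phi_{\omega_\lambda}^{-1}\circ\ad_x^{*}\circ\Phi_{\omega_\lambda}$ in matrix form), arrive at the same six families of nonzero products with coefficients $\lambda_{j+1}/\lambda_j$, $-\lambda_{n-j}/\lambda_{n-j+1}$, $-1/\lambda_j$, $-1/\lambda_{n-j+1}$, $\lambda_j$, $\lambda_{j+1}$, and identify these with $\alpha_j$, $\beta_j$, $-1+\alpha_j$, $1+\beta_j$, $\gamma_j$, $-1+\gamma_j$. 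The only cosmetic difference is that you verify the recurrences \eqref{eq3-6}--\eqref{eq3-7} while the paper substitutes into the closed forms \eqref{eq3-3}--\eqref{eq3-5}; both are fine.
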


We remark that $\alpha=\dfrac{\lambda-1}{\lambda}$ and $\beta=-\dfrac{\lambda-n+2}{\lambda-n+1}$ satisfy conditions \eqref{cond3-1}, \eqref{cond3-2}, and \eqref{cond3-3} since $\lambda \not\in {\Z}$.

\begin{proof}
For $\alpha=\dfrac{\lambda-1}{\lambda}, \beta=-\dfrac{\lambda-n+2}{\lambda-n+1}$, we have
\begin{align*}
\alpha_i
&=\dfrac{i(\alpha-1)+1}{(\alpha-1)(i-1)+1}=\dfrac{\lambda-i}{\lambda-i+1}=\dfrac{\lambda_{i+1}}{\lambda_{i}},  \\
\beta_i
&=\dfrac{i(\beta+1)-1}{-(\beta+1)(i+1)+1}=-\dfrac{\lambda-(n-i)+1}{\lambda-(n-i+1)+1}=-\dfrac{\lambda_{n-i}}{\lambda_{n-i+1}}, \quad \text{ and}\\
\gamma_i
&=\dfrac{\alpha_i\beta_{n-i}-\beta_{n-i}}{2\alpha_i\beta_{n-i}+\alpha_i-\beta_{n-i}}
=\dfrac{-\dfrac{\lambda_{i+1}}{\lambda_{i}}\dfrac{\lambda_i}{\lambda_{i+1}}+\dfrac{\lambda_i}{\lambda_{i+1}}}{-2\dfrac{\lambda_{i+1}}{\lambda_{i}}\dfrac{\lambda_i}{\lambda_{i+1}}+\dfrac{\lambda_{i+1}}{\lambda_{i}}+\dfrac{\lambda_i}{\lambda_{i+1}}}\\
&=\dfrac{-\lambda_{i+1}\lambda_i+\lambda_i^{2}}{\lambda_{i+1}^{2}-2\lambda_{i+1}\lambda_{i}+\lambda_i^{2}}=\lambda_i.
\end{align*}
From the above relations, we obtain
$$
-1+\alpha_{i}=-\lambda_i^{-1},\,  1+\beta_i=-\lambda_{n-i+1}^{-1},  \text{ and } -1+\gamma_{n-i}=-1+\lambda_{n-i}=\lambda_{n-i+1}.
$$

Let $\Phi_{\omega_\lambda}\colon \mathfrak{g}\to \mathfrak{g}^{*}$  be the isomorphism of vector spaces defined by $\left(\Phi_{\omega_{\lambda}}(x)\right)(y)=\omega_\lambda(x, y)$.
Then $\ddelta_{\omega_\lambda}$ is written as 
$$
x\ddelta_{\omega_\lambda} y=\Phi_{\omega_\lambda}^{-1}\circ \ad_x^{*}\circ \Phi_{\omega_\lambda}(y).
$$
Let $\ell^{\lambda}$ and $\ell^{(\alpha, \beta)}$ be left translation maps with respect to left-symmetric structures $\ddelta_{\omega_\lambda}$ and $\ddelta_{(\alpha,\beta)}$, respectively.
Since $z,e_1,$ and $ f_1$ are elements of center of $T^{*}\mathfrak{g}$, $\ell^{\lambda}_z= \ell^{\lambda}_{e_1}=\ell^{\lambda}_{f_1}=0$.

Put $e_0'=t, e_i'=e_i, e_{n+i}'=f_i,$ and $ e_{2n+1}'=z$ for $i=1,\ldots, n$.
By the definition of the Lie bracket of $T^{*}\mathfrak{g}$, we have the following representation matrices with respect to the basis $\{e_0', \ldots ,e_{2n+1}'\}$ and its dual basis: 
\begin{align*}
\ad_{t}&=\dsum_{k=1}^{n-1}E_{i, i+1}-\dsum_{k=1}^{n-1} E_{n+i, n+i+1}.\\
\ad_{e_{i+1}} &=-E_{i, 0}+E_{2n+1, 2n-i+1}\quad (i=1,\ldots , n-1).\\
\ad_{f_{i+1}}&= E_{n+i, 0} -E_{2n+1, n-i+1}\quad (i=1,\ldots , n-1).\\
\ad_{t}^{*}&=-\dsum_{k=1}^{n-1}E_{i+1, i} +\dsum_{k=1}^{n-1} E_{n+i+1, n+i}.\\
\ad_{e_{i+1}}^{*}&=E_{0, i}-E_{2n-i+1, 2n+1}\quad (i=1,\ldots , n-1).\\
\ad_{f_{i+1}}^{*}&=-E_{0, n+i}+E_{n-i+1, 2n+1}\quad (i=1,\ldots , n-1).\\
\Phi_{\omega_{\lambda}}&=E_{2n+1,0}-E_{0, 2n+1}+\dsum_{k=1}^{n}\lambda_k E_{2n-k+1, k}-\dsum_{k=1}^{n} \lambda_k E_{k, 2n-k+1}.\\
\Phi_{\omega_{\lambda}}^{-1}&=-E_{2n+1,0}+E_{0, 2n+1}-\dsum_{k=1}^{n}\lambda_k^{-1} E_{2n-k+1, k}+\dsum_{k=1}^{n} \lambda_k^{-1} E_{k, 2n-k+1}.
\end{align*}
Hence we obtain the following equations:
\begin{align*}
\ell^{\lambda}_t
&=\Phi_{\omega_{\lambda}}^{-1}\circ \ad_{t}^{*}\circ \Phi_{\omega_{\lambda}}
= \Phi_{\omega_{\lambda}}^{-1}\left(\dsum_{l=1}^{n-1}\lambda _lE_{l+1, 2n-l+1}+\dsum_{l=2}^{n}\lambda_l E_{2n-l+2, l}\right)\\
&=-\dsum_{l=1}^{n-1}\lambda_{l+1}^{-1}\lambda_{l}E_{2n-l, 2n-l+1}+\dsum_{k=1}^{n-1}\lambda_k^{-1}\lambda_{k+1}E_{k,k+1}\\
&=\dsum_{k=1}^{n-1}\left( \lambda_k^{-1}\lambda_{k+1} E_{k,k+1}-\lambda_{n-k+1}^{-1}\lambda _{n-k}E_{n+k, n+k+1}\right)\\
&=\dsum_{k=1}^{n-1}\left( \alpha_k E_{k,k+1}+\beta_kE_{n+k, n+k+1}\right)\\
&=\ell_t^{(\alpha,\beta)}.\\
\ell^{\lambda}_{e_{i+1}}
&=\Phi_{\omega_\lambda}^{-1}\circ\ad_{e_{i+1}}^{*}\circ \Phi_{\omega_\lambda}
=\Phi_{\omega_\lambda}^{-1}\left( -\lambda_i E_{0, 2n-i+1}-E_{2n-i+1, 0} \right)\\
&=\lambda_iE_{2n+1, 2n-i+1}-\lambda_i^{-1} E_{i, 0}\\
&=\gamma_iE_{2n+1, 2n-i+1}+(-1+\alpha_i) E_{i, 0}\\
&=\ell_{e_{i+1}}^{(\alpha,\beta)}\quad (i=1,\ldots, n-1).\\
\ell^{\lambda}_{f_{i+1}}
&=\Phi_{\omega_\lambda}^{-1}\circ \ad_{f_{i+1}}^{*}\circ \Phi_{\omega_\lambda}
=\Phi_{\omega_\lambda}^{-1}\left( -\lambda_{n-i+1}E_{0,n-i+1}+E_{n-i+1, 0} \right)\\
&=\lambda_{n-i+1}E_{2n+1, n-i+1}-\lambda_{n-i+1}^{-1}E_{n+i, 0}\\
&=(-1+\gamma_{n-i})E_{2n+1, n-i+1}+(1+\beta_i)E_{n+i, 0}\\
&=\ell_{f_{i+1}}^{(\alpha,\beta)}\quad (i=1,\ldots, n-1).
\end{align*}
Therefore $\ell^{\lambda}=\ell^{(\alpha,\beta)}$.
\end{proof}
\begin{thm}\label{thm4-1}
Two symplectic structures $\omega_{\lambda}$ and $\omega_{\lambda'}$ are symplectomorphic up to homothety if and only if either of the following holds:
\begin{itemize}
\item[(i)] $\lambda=\lambda'$.
\item[(ii)] $\lambda+\lambda'=n-1$, where $n$ is the dimension of $\mathfrak{g}$.
\end{itemize}
\end{thm}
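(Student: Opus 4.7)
The strategy is to reduce the symplectic equivalence problem to the already-solved left-symmetric isomorphism problem, using Lemmas \ref{lem4-1} and \ref{lem4-3} to bridge the two settings and then applying Theorem \ref{thm3-1}.

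For the forward direction, I will first invoke Lemma \ref{lem4-1}: if $\omega_\lambda$ and $\omega_{\lambda'}$ are symplectomorphic up to homothety, then the induced left-symmetric structures $\ddelta_{\omega_\lambda}$ and $\ddelta_{\omega_{\lambda'}}$ are isomorphic. By Lemma \ref{lem4-3} these coincide with $\ddelta_{(\alpha,\beta)}$ and $\ddelta_{(\alpha',\beta')}$ for $\alpha=(\lambda-1)/\lambda$, $\beta=-(\lambda-n+2)/(\lambda-n+1)$, and similarly for $\alpha'$, $\beta'$. A short check shows that $\lambda, \lambda' \notin \Z$ ensures the hypotheses of Theorem \ref{thm3-1} on the sequences $\{\alpha_i\}$, $\{\beta_i\}$ (each is equivalent to excluding a finite set of integer values of $\lambda$), so Theorem \ref{thm3-1} applies. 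Case (i), $(\alpha,\beta)=(\alpha',\beta')$, immediately gives $\lambda=\lambda'$ because $\alpha=1-1/\lambda$ is injective. For case (ii), I note that the recurrences \eqref{eq3-6}--\eqref{eq3-7} propagate the identities $\alpha_i=-\beta_i'$ and $\beta_i=-\alpha_i'$ from $i=1$, so it is enough to verify them at $i=1$; a direct manipulation of $(\lambda-1)/\lambda=(\lambda'-n+2)/(\lambda'-n+1)$ reduces to $\lambda+\lambda'=n-1$, and the condition $\beta=-\alpha'$ yields the same relation.

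For the converse, $\lambda=\lambda'$ is trivial. When $\lambda+\lambda'=n-1$, I will use the explicit linear isomorphism $\phi$ built in the proof of the converse of Theorem \ref{thm3-1}, namely $z\mapsto (-1)^n z$, $e_j\mapsto (-1)^{n-j+1}f_j$, $f_j\mapsto (-1)^{n-j}e_j$, $t\mapsto t$, and show directly that $\phi^{*}\omega_{\lambda'}=(-1)^n\omega_\lambda$. The dual pullback gives $\phi^{*}z^{*}=(-1)^n z^{*}$, $\phi^{*}e_i^{*}=(-1)^{n-i}f_i^{*}$, and $\phi^{*}f_{n-i+1}^{*}=(-1)^i e_{n-i+1}^{*}$, from which $\phi^{*}(e_i^{*}\wedge f_{n-i+1}^{*})=-(-1)^n e_{n-i+1}^{*}\wedge f_i^{*}$. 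After reindexing $k=n-i+1$ and comparing coefficients, the equality $\phi^{*}\omega_{\lambda'}=(-1)^n\omega_\lambda$ becomes $\lambda_k=-\lambda'_{n-k+1}$, i.e.\ $\lambda+\lambda'=n-1$.

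The main technical step is the sign-tracking in the pullback computation, which also produces the homothety constant $c=(-1)^n$; everything else is a matter of substitution and invoking prior results. I do not anticipate any serious obstacle beyond the bookkeeping.
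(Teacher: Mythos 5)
Your proposal is correct and follows essentially the same route as the paper: reduce via Lemma \ref{lem4-1} and Lemma \ref{lem4-3} to Theorem \ref{thm3-1} for the forward direction, and for the converse use the identity map in case (i) and the explicit map $z\mapsto(-1)^nz$, $e_j\mapsto(-1)^{n-j+1}f_j$, $f_j\mapsto(-1)^{n-j}e_j$, $t\mapsto t$ in case (ii), with the same pullback computation yielding $\phi^{*}\omega_{\lambda'}=(-1)^{n}\omega_{\lambda}$ under $\lambda+\lambda'=n-1$. The only step worth making explicit (as the paper does) is the direct verification that this $\phi$ is an automorphism of the Lie algebra $T^{*}\mathfrak{g}$, which your sign bookkeeping already implicitly covers.
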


\begin{proof}
Suppose that $\omega_{\lambda}$ and $\omega_{\lambda'}$ are symplectomorphic up to homothety.
Then the left-symmetric structures $\ddelta_{\omega_{\lambda}}$ and $\ddelta_{\omega_{\lambda'}}$ are isomorphic by Lemma \ref{lem4-1}.
By Lemma \ref{lem4-3} and Theorem \ref{thm3-1}, $\lambda$ and $\lambda'$ satisfiy either of the following:
\begin{itemize}
\item[(i)] $\dfrac{\lambda-1}{\lambda}=\dfrac{\lambda'-1}{\lambda'}$ and $-\dfrac{\lambda-n+1}{\lambda-n+2}=-\dfrac{\lambda'-n+1}{\lambda'-n+2}.$
\item[(ii)] $\dfrac{\lambda-i}{\lambda-i+1}=\dfrac{\lambda'-n+i+1}{\lambda'-n+i}$  and $\dfrac{\lambda-n+i+1}{\lambda-n+i}=\dfrac{\lambda'-i}{\lambda'-i+1}$ for each $i=1\ldots, {n-1}.$
\end{itemize}
If (i) holds, then we have $\lambda=\lambda'$.
If (ii) holds, then we have $\lambda+\lambda'=n-1$.

Conversely, suppose that either condition (i) or condition (ii) of Theorem \ref{thm4-1} holds.
If (i) holds, then the identity map is a symplectomorphism from $\omega_\lambda$ to $\omega_{\lambda'}$.
If (ii) holds, we define a linear isomorphism $\phi\colon T^{*}\mathfrak{g}\to T^{*}\mathfrak{g}$ by 
$$
\phi(t)=t,\,   \phi(z)=(-1)^{n} z,\, \phi(e_i)=(-1)^{n-i+1} f_i, \text{ and } \phi(f_i)=(-1)^{n-i}e_i
$$
for $i=1,\ldots ,n$.

Then $\phi$ satisfies the following equations:
\begin{align*}
\phi([t,e_{i+1}])&=\phi(e_i)=(-1)^{n-i+1} f_i. \\
[\phi(t), \phi(e_{i+1})]&=[t,(-1)^{n-i}f_{i+1}]=(-1)^{n-i+1}f_i.\\
\phi([t,f_{i+1}])&=\phi(-f_i)=(-1)^{n-i+1}e_{i}.\\
[\phi(t),\phi(f_{i+1})]&=[t,(-1)^{n-i+1}e_{i+1}]=(-1)^{n-i+1}e_{i}.\\
\phi([e_{i+1}, f_{n-i+1}])&=\phi(z)=(-1)^{n}z. \\
[\phi(e_{i+1}), \phi(f_{n-i+1})]&=[(-1)^{n-i}f_{i+1}, (-1)^{i-1}e_{n-i+1}]=(-1)^{n}z.\\
\end{align*}
Hence $\phi$ is an automorphism of $T^{*}\mathfrak{g}$.

Since $\lambda+\lambda'=n-1$, we have
$$
\lambda'_{n-l+1}=\lambda'-n+l=-\lambda+l-1=-\lambda_{l}
$$
for $l=1,\ldots, n$.
Hence we have
\begin{align*}
\phi^{*}\omega_{\lambda'}
&=(-1)^{n}t^{*}\wedge z^{*}+\dsum_{k=1}^{n}\lambda_k' (-1)^{n-k}f_k^{*}\wedge (-1)^{k}e_{n-k+1}^{*}\\
&=(-1)^{n}\left(t^{*}\wedge z^{*} -\dsum_{l=1}^{n} \lambda_{n-l+1}'e_l^{*}\wedge f_{n-l+1}^{*}\right)\\
&= (-1)^{n}\left(t^{*}\wedge z^{*} +\dsum_{l=1}^{n} \lambda_le_l^{*}\wedge f_{n-l+1}^{*}\right)\\
&=(-1)^{n}\omega_\lambda.
\end{align*}
Hence $\phi$ is a symplectomorphism up to homothety.

\end{proof}

For a fixed $n\geq 2$, any distinct elements $\lambda, \lambda'$ with 
$$
\lambda,\lambda'\in B=\left\{\lambda\in{\R} \relmiddle| \lambda>\dfrac{n-1}{2}, \lambda\not\in{\Z}\right\}
$$
do not satisfy the condition (ii) of Theorem \ref{thm4-1}.
Hence any two distinct symplectic structures in $\{\omega_\lambda\mid \lambda\in B\}$ are not symplectomorphic up to homothety.

\begin{cor}\label{cor4-1}
For any $n\geq 2$, the $n$-step nilpotent Lie algebra $T^{*}\mathfrak{g}$ admits uncountably many symplectic structures which are pairwise non-symplectomorphic up to homothety.
\end{cor}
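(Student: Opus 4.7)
The plan is to exhibit an uncountable subfamily of the symplectic structures $\{\omega_\lambda\}$ constructed in Lemma \ref{lem4-2} and apply Theorem \ref{thm4-1} to check that no two of its members are symplectomorphic up to homothety. Concretely, I would take
$$
B = \left\{\lambda \in \R \relmiddle| \lambda > \tfrac{n-1}{2},\ \lambda \notin \Z \right\},
$$
which is an uncountable subset of $\R \setminus \Z$, and consider the family $\{\omega_\lambda : \lambda \in B\}$. Each $\omega_\lambda$ is a symplectic structure on $T^*\mathfrak{g}$ by Lemma \ref{lem4-2}, so this family is well-defined.

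The main step is then to verify that for distinct $\lambda, \lambda' \in B$ the symplectic structures $\omega_\lambda$ and $\omega_{\lambda'}$ fail both alternatives of Theorem \ref{thm4-1}. Condition (i) is excluded because $\lambda \neq \lambda'$. Condition (ii) is excluded because $\lambda, \lambda' > (n-1)/2$ forces $\lambda + \lambda' > n-1$, so one cannot have $\lambda + \lambda' = n-1$. Applying the contrapositive of Theorem \ref{thm4-1}, we conclude $\omega_\lambda$ and $\omega_{\lambda'}$ are not symplectomorphic up to homothety. Together with $|B| = |\R|$, this gives the desired uncountable pairwise non-equivalent family.

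The remaining bookkeeping is the $n$-step nilpotency of $T^*\mathfrak{g}$, which was already verified at the start of Section 3: the lower central series computation \eqref{eq3-1} and the identification \eqref{eq3-2} of the center show that $(T^*\mathfrak{g})^{(n-1)} = \langle z, e_1, f_1 \rangle \neq 0$ while $(T^*\mathfrak{g})^{(n)} = 0$. There is no substantive obstacle here; all of the work has been absorbed into Lemmas \ref{lem4-2}, \ref{lem4-3} and Theorem \ref{thm4-1}, and the corollary is essentially a cardinality observation about the parameter region $B$.
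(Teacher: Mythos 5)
Your proposal is correct and matches the paper's argument exactly: the paper also takes $B=\{\lambda\in\R \mid \lambda>\tfrac{n-1}{2},\ \lambda\notin\Z\}$, notes that distinct $\lambda,\lambda'\in B$ satisfy $\lambda+\lambda'>n-1$ so that neither alternative of Theorem \ref{thm4-1} can hold, and concludes from the uncountability of $B$. No gaps.
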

\bibliography{reference}
\par\noindent{\scshape \small
Faculty of Liberal Arts and Sciences,
Chukyo University, \\
101 Tokodachi, Kaizu-cho, Toyota-shi, Aichi 470-0393, Japan.}
\par\noindent{knaoki@lets.chukyo-u.ac.jp}

\end{document}